\newtheorem{theorem}{Theorem}
\title{Nonexistence of $srg(19,6,1,2)$: Combinatorial Proof}
\author{Reimbay Reimbayev}
\date{} 
\begin{document}
\maketitle

\begin{abstract}
An $srg(19,6,1,2)$ is the graph with the smallest parameter set in the family of strongly regular graphs with parameters $\lambda=1$ and $\mu=2$ for which the respective graph doesn't exist. The proof of that fact is based on algebraic arguments, particularly, on the Integrality Test, the very usefull tool for studying strongly regular graphs. To our best knowledge, there have not been proofs of pure combinatorial nature. In this short paper, we have decided to fill in this gap.
\end{abstract}


\bigskip
In the family of strongly regular graphs with parameters $\lambda = 1$ and $\mu = 2$, the graph $srg(19,6,1,2)$ is not a `superstar' like an $srg(99,14,1,2)$, which is under the search for at least half a century to the moment \cite{Conway}. Moreover, it doesn't even exist - a well established fact. But this is the first graph that doesn't exist in the line of possible ones. For the first two possible values for the parameters $n$ and $k$, respectively the order of the graph and valency of its vertices, the graphs exist: $K_3$ (for $k=2$) - a complete graph on three vertices , $P_9$ (for $k=4$) - Paley graph on nine vertices.

For the parameter $k=6$, which is the next possible, i.e. our case, the graph doesn't exist due to the Integrality Test, an incredibly useful tool in studying strongly regular graphs. The tool that uses multiplicities of eigenvalues of an adjacency matrix of a graph to determine its nonexistence. But Integrality Test is only a necessary condition for existance of a strongly regular graph and thus there are still some cases that slips through it, namely for the parameters of $k=14, 22, 112, 994.$ 

Out of these four graphs that pass the Integrality Test, only for $k=22$ the graph has been shown to exist \cite{Berlekamp}. For the other ones, the question of existence has not been resolved. For that reason, we think, any attempt to prove the existance of such graphs without referral to Integrality Test might be useful.

\bigskip

By definition, the graph is strongly regular if a pair of its vertices has exactly $\lambda$ common neighbors given they are adjacent, or $\mu$ common neighbors otherwise \cite{Gordon, Brouwer}. Another way of defining strongly regular graphs, perhaps more precise as it cuts away some trivial cases like complete graphs, is by using spectral graph theory, by which the finite graph is strongly regular if its spectrum consists of exactly three eigenvalues, one of which is $k$ with multiplicity one \cite{BrouwerMaldeghem}.  

For simplicity, let us call an $srg(19,6,1,2)$ further on just $G$. Below is the formal statement we are going to prove.

\begin{theorem}
srg(19,6,1,2) doesn't exist.
\end{theorem}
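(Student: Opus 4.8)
The plan is to fix an arbitrary vertex $v$, force the entire configuration around it directly from the defining conditions, and then locate a single pair of vertices whose common-neighbor count cannot be reconciled with $\mu=2$. First I would analyze $N(v)$, a set of $k=6$ vertices. For each $u\in N(v)$ the adjacent pair $u,v$ has exactly $\lambda=1$ common neighbor, and that neighbor lies in $N(v)$; hence every $u$ has exactly one neighbor inside $N(v)$, so $N(v)$ induces a perfect matching of three edges, which I will write $a_1a_2$, $b_1b_2$, $c_1c_2$. Since each matched pair is adjacent and already shares $v$, the vertex $v$ is its \emph{only} common neighbor, so no further vertex can be adjacent to both endpoints of a matching edge.

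Next I would study the remaining set $R=V\setminus(\{v\}\cup N(v))$, which has $19-7=12$ vertices. As $v$ and any $x\in R$ are non-adjacent, $\mu=2$ forces $x$ to have exactly two neighbors in $N(v)$, and by the previous remark these two neighbors lie on two different matching edges; I call such a pair a cross pair. There are exactly twelve cross pairs, and applying $\mu=2$ to the two (non-adjacent) endpoints of a cross pair, one of whose common neighbors is $v$, shows each cross pair has precisely one common neighbor in $R$. This yields a bijection between $R$ and the twelve cross pairs, so I may label the vertices of $R$ as $x_{pq}$ according to their two neighbors $p,q$ in $N(v)$.

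I would then determine all edges inside $R$, which is $4$-regular. Grouping the vertices of $R$ by the matching edge that their label omits partitions $R$ into three $4$-sets; applying $\mu=2$ to a vertex $x$ together with \emph{both} endpoints of its omitted edge shows that all four neighbors of $x$ avoid its own group, so each group is independent, and the same conditions applied to the remaining vertices of $N(v)$ then pin down the four neighbors of every $x$ uniquely. At this stage the graph on $V$ is completely forced, with no choices remaining.

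The contradiction then comes for free: take two vertices of the same group, for instance $x_{a_1b_1}$ and $x_{a_1b_2}$. They are non-adjacent, since they lie in the same independent group, yet they share the common neighbor $a_1$ in $N(v)$ together with two common neighbors inside $R$, for a total of three, contradicting $\mu=2$. Because the structure was forced with no freedom, no such graph $G$ can exist. I expect the main obstacle to be the bookkeeping of the third step, namely verifying that the four neighbors of each vertex of $R$ are \emph{uniquely} determined; the conceptual crux is recognizing that the local configuration around any vertex is completely rigid, after which the failure of $\mu=2$ is immediate.
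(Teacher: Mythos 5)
Your reduction up to the independence of the three groups is correct and nicely argued: $N(v)$ induces a perfect matching, each vertex of $R$ is labelled by a cross pair, the labelling is bijective, and the $2+2=4$ count with the omitted edge does show each group is independent. The genuine gap is the sentence claiming that the conditions coming from the remaining vertices of $N(v)$ ``pin down the four neighbors of every $x$ uniquely,'' so that the graph is ``completely forced, with no choices remaining.'' Those conditions are only counting rules: for $x_{a_1b_1}$ they say that among its four neighbors in $R$ exactly one label contains $a_1$, one contains $a_2$, one contains $b_1$, one contains $b_2$, and the labels $c_1,c_2$ each occur twice. This fixes the neighborhood only up to deciding which $c$-index accompanies each of $a_1,a_2,b_1,b_2$, and several distinct edge sets on $R$ satisfy all twelve vertices' counting rules simultaneously, so nothing is uniquely forced. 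Your final step inherits the gap: it is not forced that $x_{a_1b_1}$ and $x_{a_1b_2}$ have two common neighbors inside $R$; in some configurations allowed by your rules their neighborhoods in $R$ are disjoint (total count $1$, a violation by deficiency rather than excess), so ``three common neighbors'' cannot simply be asserted.

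The endgame can, however, be closed with a parity argument using only the rules you already derived. Write $N(x_{a_1b_1})\cap R=\{x_{a_1c_\alpha},x_{a_2c_\beta},x_{b_1c_\gamma},x_{b_2c_\delta}\}$ with $\alpha+\beta+\gamma+\delta=6$, and $N(x_{a_1b_2})\cap R=\{x_{a_1c_{\alpha'}},x_{a_2c_{\beta'}},x_{b_1c_{\gamma'}},x_{b_2c_{\delta'}}\}$ with $\alpha'+\beta'+\gamma'+\delta'=6$. Since every vertex $x_{a_ic_j}$ has exactly one neighbor labelled with $a_1$ (by $\lambda$ for $i=1$, by $\mu$ for $i=2$), no such vertex is adjacent to both $x_{a_1b_1}$ and $x_{a_1b_2}$; hence $\alpha\neq\alpha'$, $\beta\neq\beta'$, there are no common neighbors among the $x_{a_ic_j}$, and $\alpha+\alpha'=\beta+\beta'=3$. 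Subtracting the two label sums then shows $(\gamma-\gamma')+(\delta-\delta')$ is even, so either both equalities $\gamma=\gamma'$, $\delta=\delta'$ hold or neither does: the common neighbors of the pair inside $R$ number $0$ or $2$. Adding the unique common neighbor $a_1$ outside $R$, the pair has an odd number of common neighbors, which can never equal $\mu=2$. With that substitution your argument becomes a complete proof, and one genuinely different from the paper's, which instead fixes a triangle, partitions the remaining vertices into the three neighborhoods and a far set $W$, shows the union of the neighborhoods induces a triangle-free cubic graph, and eliminates the resulting $C_6+C_6$ and $C_{12}$ structures by distributing triangle apexes over $W$.
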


\begin{proof}
Choose any triangle, $K_3$, with vertices $a,b,c$ from $G=srg(19,6,1,2)$. We will denote $A = \{v\in G | va \in E(G), v \neq b,c \}, B = \{v\in G | vb \in E(G), v \neq a,c \}, C = \{v\in G | vc \in E(G), v \neq a,b \}$, the sets of vertices adjacent to a given vertex of the original triangle except the vertices of the triangle.

One more set is needed $W = V(G)\setminus (A\cup B\cup C \cup \{ a,b,c\})$, the set of vertices at distance 2 from triangle $\{ a,b,c\}$. Notice that $\{ a,b,c\}$, $A, B, C$, and $W$ - is a partition of $V(G)$, and $|A| = |B| = |C| = |W| = 4$.
Now, here is the goal we want to achieve: to show that an induced subgraph on vertices $A \cup B \cup C, (G[A\cup B\cup C])$ cannot satisfy both $\lambda=1$ (Condition I) and $\mu=2$ (Condition II). First, we show that it doesn't contain triangles. Vertex $w$ from $W$ is adjacent to exactly $2$ vertices from $A$, as $wa \notin E(G)$. The same hold true regarding $B$ and $C$ (Figure 1). But $deg(w) = 6$, and an induced subgraph on W is an empty graph. That means there are exactly $12 (3 \cdot 4)$ triangles with one vertex on $W$ and two on $A \cup B \cup C$. In total there 19 triangles in G ($\frac{nk}{6}$, Proposition 3 \cite{ReiLowerBound}), out of which 7 have at least one vertex from \{ a,b,c\}. Thus, $19-7-12 = 0$, and $A\cup B\cup C$ has no triangles.

\begin{figure}
	\includegraphics[width=0.4\textwidth]{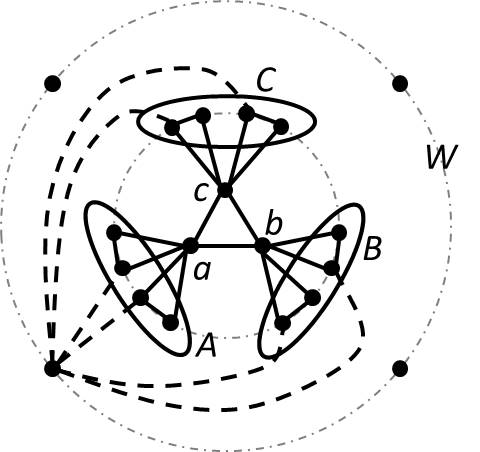}
		\centering
		\caption{The structure of $srg(19,6,1,2)$}
		\label{fig3}
\end{figure}

For any $\tilde{a} \in A$ there exists unique $\tilde{b} \in B$ such that $\tilde{ab} \in E(G)$. This is due to Condition II because $\tilde{a}b \notin E(G)$. The function given by this adjacency relationship, say $f: A \rightarrow B$ is bijective. Suppose not. Then there exist $\tilde{b}_1$ and $\tilde{b}_2$ both adjacent to $\tilde{a} \in A$. But then $\tilde{a}$ and $b$, two non-adjacent vertices, have three common neighbors, - $a$, $\tilde{b}_1, \tilde{b}_2$, which is impossible. Similarly, there cannot be $\tilde{a}_1, \tilde{a}_2 \in A$ and $\tilde{b} \in B$, such that $\tilde{a}_1 \tilde{b} \in E(G)$ and $\tilde{a}_2 \tilde{b} \in E(G)$. We can continue in this way and set a bijection $B \rightarrow C$, then another one $C \rightarrow A$. Bijectivity of the relationship $A\rightarrow B\rightarrow C\rightarrow A$, or simply permutation of vertices of $A$, makes the induced subgraph on $A\cup B\cup C$ regular of degree three. Not two because we have to take into account pairwise connectedness of vertices in $A$, as well as in $B$ and $C$. Thus, without these edges, $G[A\cup B\cup C]$ consists of cycles of length 3, 6, 9, or 12. The cycles of lenght 3, triangles, are not possible as we already agreed before. By the same reason $C_9$ is also impossible, otherwise $G[A\cup B\cup C] = C_9 + C_3$ plus additional inner chords, which gives us a union of two cycles (and additional chords or bridges), one of which is a triangle. There has left two cases, which we consider separately.

\textbf{Case 1}:$G[A\cup B\cup C] = C_6 + C_6$ plus chords and/or connectors.
Consider a 6-cycle, with possible chords that we will ignore altogether, $a_1 b_1 c_1 a_2 b_2 c_2$, where $a_i \in A, b_i \in B, c_i \in C, i=1,2$. Each edge is a base of a triangle with the third vertex on $W = \{w_1,w_2,w_3,w_4\}$. The goal is to show that the proper distribution of the vertices of the given six triangles among four elements of $W$ is impossible. Denote $w_1$ the vertex of the first triangle based on the edge $a_1 b_1$ and move anti-clockwise as depicted on the Figure 2 (case 1). We cannot chose $w_1$ for the next triangle with basis on $b_1 c_1$ or else the edge $b_1 w_1$ belongs to two trianlges. Chose $w_2$. Next triangle based on $c_1 a_2$ necesseraly should have a vertex distinct from both previous ones: $w_1$ and $w_2$. Otherwise $b_1 c_1$ belongs to two triangles $b_1 c_1 w_1$ and $b_1 c_1 w_1$. So $w_3\neq w_1, w_2$.

\begin{figure}
	\includegraphics[width=0.8\textwidth]{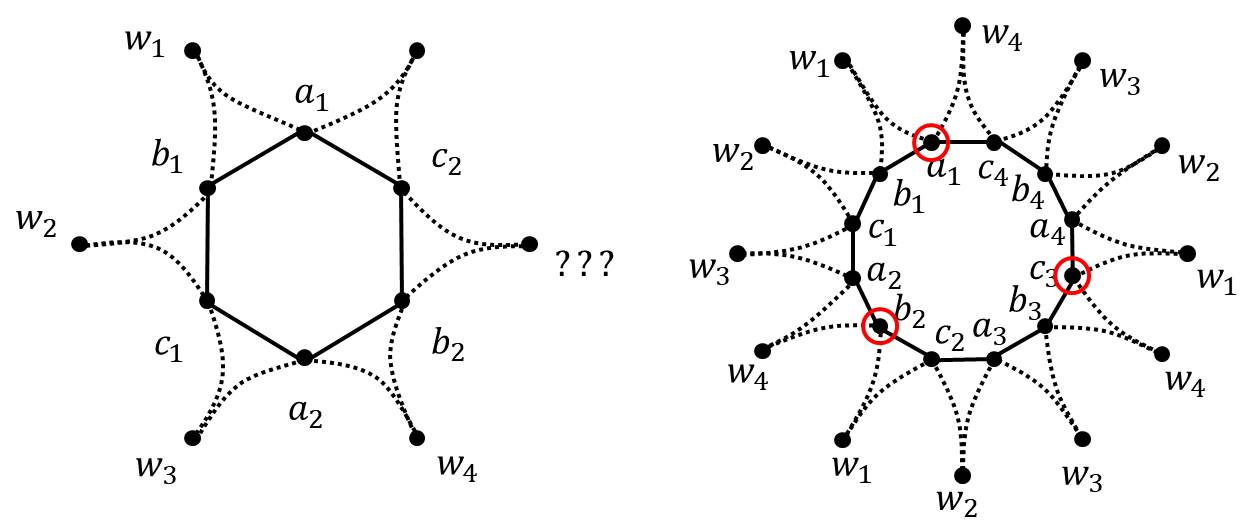}
		\centering
		\caption{Illustration for the Theorem 1: left - for case 1, right - for case 2.}
		\label{fig4}
\end{figure}

Last step. Consider triangle $a_2 b_2 w_4$, where $w_4\neq w_2, w_3$ due to the same reasoning as before. But $w_4\neq w_1$ as well! Assume not, and $w_4=w_1$. We know that every vertex belongs to exactly three triangles ($k=6$). For $w_1$ those are $w_1 a_1 b_1$, $w_1 a_2 b_2$, and $w_1 x y$, where $x$ and $y$ are both belong to $C$. But that is not possible, otherwise the edge $xy$ belongs to $w_1 x y$ and $c x y$, two distinct triangles at the same time. We run out of vertices from $W$ to assign for the next triangle in the line.

\textbf{Case 2}:$G[A\cup B\cup C] = C_{12}$ plus chords.
The only possible way of distributing the vertices of $W$ is shown in the figure. Again, this is due to the same reasons as in Case 1. We have also established earlier that $G[W] = \overline{K_4}$ is an empty graph. Thus $w_1 w_4 \notin E(G)$. But this two vertices now have three common neighbors, namely $a_1, b_2, c_3$, which is not possible. That completes the proof.

\end{proof}

Similarly, it can be proved, with a bit more work, for the next parameter of $k$. Instead, using the Integrality Test \cite{West, Cvetcovic2} eliminates decisively all the graphs up to $k=14$, which is also called Conway-99 graph and is among five problems Conway has offered to solve \cite{Conway}. Three more values of $k$, $k=22, 112, 994$, also pass the Integrality Test, out of which one for $k=22$, to ones amusement, has been already constructed \cite{Berlekamp}.

\bigskip
In conclusion, in this short paper we have proved nonexistance of the lowest possible graph from the family of strongly regular graphs with parameters $\lambda = 1$ and $\mu =2$ using strictly combinatorial arguments. Although, the more important graph from the same family is an $srg(99,14,1,2)$, existance of which is still undefined, we do not think that our work is a futile exercise but rather an attempt to look from another perspective on the problem. The search for an $srg(99,14,1,2)$ is still an on-going process.


\end{document}